\title{Double Clustering and Graph Navigability} 
\author{Oskar
  Sandberg \thanks{The Department of Mathematical Sciences, Chalmers
    University of Technology and Göteborg University. ossa@math.chalmers.se}}
\newtheorem{theorem}{Theorem}[section]
\newtheorem{lemma}[theorem]{Lemma}
\newtheorem{definition}[theorem]{Definition}
\newtheorem{corollary}[theorem]{Corollary}
\newtheorem{conjecture}[theorem]{Conjecture}
\newcommand{\PR}{\mathbf{P}}
\newcommand{\given}{\,\mathbf{|}\,}
\newcommand{\E}{\mathbf{E}}
\newcommand{\Z}{\mathbb{Z}}
\newcommand{\AND}{\text{ and }}
\newenvironment{proofof}[1]{\emph{Proof of #1:}}{\begin{flushright} $\Box$ \end{flushright}}
\begin{document}
\maketitle

\begin{abstract}
  Graphs are called \emph{navigable} if one can find short paths
  through them using only local knowledge. It has been shown that for
  a graph to be navigable, its construction needs to meet strict
  criteria. Since such graphs nevertheless seem to appear in nature,
  it is of interest to understand why these criteria should be
  fulfilled.

  In this paper we present a simple method for constructing graphs
  based on a model where nodes vertices are ``similar'' in two
  different ways, and tend to connect to those most similar to them -
  or cluster - with respect to both. We prove that this leads to
  navigable networks for several cases, and hypothesize that it also
  holds in great generality.  Enough generality, perhaps, to explain
  the occurrence of navigable networks in nature.
\end{abstract}

\section{Introduction}

Motivated by the small-world experiments of Stanley Milgram
\cite{milgram:smallworld}, and the models for social networks inspired
by them \cite{watts:smallworld}, Jon Kleinberg introduced the question
of whether graphs can be searched (or navigated) in a decentralized
manner \cite{kleinberg:smallworld}. In particular, he showed that when
a grid structure is augmented by random edges, whether it is possible
to use those edges to efficiently route queries from one point in the
grid to another depends on their distribution. In particular, if each
vertex $x$ in a $d$-dimensional grid is given one additional ``long
range'' link to some vertex, beyond those to its nearest neighbors,
then when the probability of $y$ being the selected is proportional to
$|x-y|^{-d}$, any greedy walk on the resulting graph is expected to
complete in a number of steps polylogarithmic to the graph size. If
the probability of $y$ being selected is any other exponent of the
distance (in particular $0$, meaning the long-range link is uniformly
selected) then any form of routing which uses only information about
the points seen thus far will require a number of steps which is a
fractional power of the graph size.

A natural question following from Kleinberg's results is to ask if
there is any dynamic which might cause the frequency of edges in
naturally occurring graphs to have the sought relationship with their
length. Several empirical studies of social network data following
Kleinberg have observed just such a relationship \cite{adamic:social}
\cite{nowell:geographic}, making it plausible that such a dynamic may
exist, but it has not been identified.

In this paper we observe that the desired edge distribution arises
naturally in another probabilistic model, that of best-yet sampling
from a population, and use this to show how navigable networks may
arise when vertices belong to two independent spaces and tend to
cluster in both (in social network terms, these may be identified with
the physical world and metaphorical space of ``interests'' - people
tend to be befriend those who are close in either sense.) The
resulting spatial random graph, which we dub the double clustering
graph, turns out to be navigable with respect to both spaces.

\subsection{Previous Work}

The original questions about navigability in a geographical setting
were posed and answered by Kleinberg in \cite{kleinberg:smallworld}
and \cite{kleinberg:navigation}. Later, Kleinberg
\cite{kleinberg:dynamics} and Watts et al.\ \cite{watts:social}
independently proposed similar models based on the categorization of
ideas or characteristics. The latter paper includes the idea that
vertices may be similar in several independent spaces, but does not
discuss how this might lead to the desired edge distribution.
Fraigniaud \cite{fraigniaud:greedy} went further and discussed
augmentation in more general settings based on tree-decompositions of
general graphs.

Some conceptually different work has been done previously to try to
explain the emergence of Kleinberg type edge frequencies. In
particular, \cite{clauset:navig} \cite{sandberg:evolving} and
\cite{sandberg:neighbor} propose graph rewiring processes which seem
to create navigable networks in their stationary state. These may help
explain how such networks arise under some circumstances, but are not
always an easy fit with observed reality, and have so far eluded
complete analysis.  \cite{duchon:emergence} shows a form of navigable
augmentation that depends on little knowledge of the base graph, but
this algorithm is complicated and does not give an intuitive reason
why the desired edge distribution should arise.

\subsection{Contribution}

We characterize our contribution as follows:

\begin{itemize}
\item We introduce the ``double clustering'' graph construction. This is
  a simple rule for constructing a graph between a set of vertices
  with positions in two different spaces, so that they tend to connect
  to those nearest in both. Double clustering can be seen as a spatial
  or combinatorial construction depending on whether the points are
  originally placed in graphs or metric spaces.

\item We show analytically that in several cases double clustering
  leads to navigable graphs.

\item We hypothesize that this holds for a much larger class of such
  graphs, something we illustrate with simulation of several relevant
  sub-models.
\end{itemize}

\section{Navigable Graphs}

\subsection{Decentralized Routing}

Let $G = (V,E)$ be a connected finite graph of high (some power of
$|V|$) diameter, and let the random graph $G'$ be created by addition
(augmentation) of random edges to $G$. It is well known, see for
instance \cite{bollobas:diameter}, that the diameter shrinks quickly
to a logarithm of $|V|$ when random edges are added between the
vertices.  Navigability concerns not a small diameter, however, but
rather a stronger property: the possibility of finding a short path
between two vertices in $G'$ using only local knowledge at vertex
visited. By local knowledge, one means that each vertex knows $G$, but
does not know which random edges have been added to any vertex until
it is visited.  The exact limits of such \emph{decentralized routing
  algorithms} have been discussed elsewhere
\cite{kleinberg:smallworld} \cite{barriere:efficient}, but since we
are interested only in upper bounds, we will define only the subset of
such algorithms of interest to us. When routing from for some vertex
$z$, in each step we will select as the next vertex in the path a
$G'$-neighbor of the current vertex, $x$. This choice will be made
entirely as a function of each neighbors $G$-distance to $z$, and
nothing else. All such algorithms are decentralized by Kleinberg's
definition.

The most direct decentralized algorithm, and the most important, is
\emph{greedy routing}. In greedy routing, the next vertex chosen is
that neighbor which is closest to $z$ in $G$ (with some tie-breaking
rule applied).  Note that both the original and augmented edges can be
used, but because the choice is only optimal with respect to $G$, the
path discovered by greedy routing will seldom be a minimal path in
$G'$. In one case below we will modify the routing to divert from a
greedy choice slightly for technical reasons, but the principle is
still the same.

We start with $G$ as a $d$-dimensional $n$-grid (that is $V =
\{1,2,\hdots,n\}^d$ and there are edges between adjacent vertices) and
independently add a single directed edge from each vertex to a random
destination. The long-range connection is added such that for $x, y
\in V$, and some $\alpha \geq 0$
\begin{equation}
  \PR(x \leadsto y) = {1 \over {h_{\alpha,n} |x-y|^\alpha}}
  \label{eq:gridaug}  
\end{equation}
where $x \leadsto y$ is the event that $x$ is augmented with an edge
to $y$ and $|x-y|$ denotes $L^1$ distance in $\Z^d$. $h_{\alpha,n}$ is a
normalizing constant.

The by now well known result of Kleinberg is that when $\alpha = d$,
greedy routing between any two points in $V$ takes $O(\log^2 n)$ steps
in expectation, while for any other value of $\alpha$ decentralized
algorithm creates routes of expected length at least $\Omega(n^s)$
steps for some $s > 0$ (where $s$ depends on the dimension but not the
algorithm chosen).

\subsection{Doubling Dimension and More General Augmentation}

It should be noted that if the graph is a $d$-dimensional grid as
above, and for $x \in V$ $B_r(x) = \{y \in V : |y - x| \leq r\}$,
then $|B_r(x)| \propto r^d$. For $\alpha = d$ (\ref{eq:gridaug})
can then be interpreted as
\begin{equation}
\PR(x \leadsto y) \propto {1 \over |B_r(x)|}.
  \label{eq:genaug}
\end{equation}
This general principle, that under navigable augmentation the
probability that $x$ links to $y$ should be inversely proportional to
the number of vertices that are closer to $x$ than $y$ has been
observed to hold across a wider class of graphs then just the grids,
see e.g. \cite{kleinberg:dynamics} \cite{duchon:anygraph}
\cite{nowell:geographic}, and seems to be the general principle behind
navigability. It leads directly to our first construction.

A natural generalization to grids is to study graphs which are
naturally grid-like. Let $B_r(v)$ be as above, but using graph instead
of grid distance.
\begin{definition}
  A family of graphs has \emph{bounded doubling dimension} if there
  exists a family wide constant $c$ such that for all $G = (V,E)$ in
  the family, $u, v \in V$, and $r \geq 1$
\[
B_r(u) \subset B_{2r}(u) \Rightarrow |B_{2r}(v)| \leq c |B_r(u)|.
\]
\label{def:bdd}
\end{definition}
The commonly used \emph{doubling dimension} of the family roughly
corresponds to the $\log_2$ of the smallest such $c$. This is not the
widest class of graphs where navigable augmentation is possible,
\cite{duchon:anygraph} and \cite{fraigniaud:doubling} have shown that
families with a sufficiently slowly growing dimension can still be
made navigable, but it provides a a good compromise between generality
and convenience for us.

In the constructions below, we will augment the base graph with more
than one long-range edge per vertex. In general, a $k$ edge
augmentation is expected to give $O(\log^2 n / k)$ expected greedy
routing time. Our constructions will generate close to $\log n$ edges
per vertex in expectation\footnotemark{}, and thus have routing time
$O(\log n)$.  They remind most of previously explored finite
long-range percolation models, for which the diameter is known to be
$O(\log n / \log \log n)$ \cite{coppersmith:diameter}.

\footnotetext{A degree going to infinity may seem unrealistic in terms
  of social networks, but note that $\log(6\text{ billion}) \approx
  22.5$ which is probably considerably less than the average number of
  acquaintances a person has in the real world for most definitions of
  the word ``acquaintance''. Our models can be given bounded degree by
  simply thinning the edges (removing each edge independently with
  probability $1 - 1/\log(n)$).}

\section{The Independent Interest Model}

We start by introducing a conceptually simple model. Compared to our
main model below, it is not a particularly interesting model of
networks dynamics, and not particularly realistic, but serves to
illustrate the reasoning we will use later.

Let $X_1, X_2, \hdots, X_n$ be $n$ random variables drawn from an
exchangeable joint distribution such that $\PR(X_i = X_j) = 0$ for $i
\neq j$. It is well known in this case that the probability that for
any $k$, $\PR(X_k \geq X_j\text{ for all }j < k) = 1/k$, and that this
event is independent for each $k$. This fact, combined with
(\ref{eq:genaug}) motivates the following graph model

\begin{definition} \emph{(The Independent Interest Graph)} Let $G =
  (V,E)$ be a graph, and for $x,y \in V$ let $d(x,y)$ be the graph
  (geodesic) distance between them. 

  Create the long range links as follows: For each vertex, independently
  create an exchangeable sequence of random variables $(X^x_y)_{y \in
    V}$. The add an edge from $x$ to $y$ if:
\[
X^x_y \geq X^x_z \text{ for all } z \neq x : d(x,z) < d(x, y).
\]
\end{definition}

In the social network metaphor, each $X^x_y$ in the construction above
can be seen as $x$'s interest in $y$, and the construction simply
means that $x$ befriends each $y$ who is more interesting to him than
any closer person. In other words, starting from his own position, $x$
searches outwards for friends, befriending each new person he meets if
that person is more interesting to him then the people he already
knows. In reality, of course, it is unlikely that the interest levels
$X^x_y$ would be independent for each $x$ and $y$ -- in particular,
one would expect a high correlation between $X^x_y$ and $X^y_x$. This
fact will inspire our later models below.

That the independent interest graph is navigable in fact follows from
the observations above and previous results, but for illustration we
will give a direct proof here.

\begin{theorem}
  For any family of connected graphs with bounded doubling dimension,
  the expected greedy path between any two vertices has expected
  length $O(\log n)$, where $n$ is the size of the graph.
\label{th:iimod}
\end{theorem}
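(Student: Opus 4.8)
The plan is to show that greedy routing on the Independent Interest Graph mimics Kleinberg's original analysis for the grid, but driven by the combinatorial structure provided by bounded doubling dimension rather than by an explicit Euclidean-power distribution. First I would fix a source $z$ and a current vertex $x \neq z$, and look at the ball $B := B_{d(x,z)}(x)$ of radius $d(x,z)$ around $x$; set $m = |B|$. The key probabilistic observation is the one quoted just before the theorem: if we order the vertices $y$ by their distance $d(x,y)$ and feed them to the best-yet process in that order, then for any prefix the probability that the last vertex added beats all earlier ones is the reciprocal of its rank, and these events are independent. So the probability that $x$ has a long-range edge landing in the ``halving shell'' $B_{d(x,z)/2}(z) \subseteq B$ (which sits at distance at most $\tfrac{3}{2}d(x,z)$ from $x$, hence is contained in a ball around $x$ of size $O(m)$ by the doubling property) is bounded below by a constant times $|B_{d(x,z)/2}(z)| / O(m)$. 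By the bounded doubling dimension applied a bounded number of times, $|B_{d(x,z)/2}(z)|$ is at least a constant fraction of $m$, so the probability of landing in the halving shell is at least some family-wide constant $\delta > 0$.

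The second step is the standard potential argument: call a step of the greedy walk a \emph{success} at vertex $x$ if, from $x$, either the augmenting edge or some grid/base edge moves the walk to a vertex whose distance to $z$ is at most half of $d(x,z)$. The halving computation above shows $\PR(\text{success at }x \given \text{phase}) \geq \delta$ uniformly, and crucially this holds even conditioned on the history of the walk, because each vertex's best-yet sequence is generated independently and is only ``revealed'' when the vertex is first visited (this is exactly the local-knowledge setup from Section~2.1). Since the base graph is connected, between successes greedy routing can always make progress toward $z$ along base edges without ever increasing the distance, so the walk never stalls. Because $d(x,z) \leq \mathrm{diam}(G) = \mathrm{poly}(n)$ initially and each success at least halves it, only $O(\log n)$ successes are needed to reach $z$; the expected number of steps to accumulate each success is at most $1/\delta = O(1)$, so the expected total path length is $O(\log n)$. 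One can make this rigorous either by a direct geometric-series / Wald-type bound on the sum of independent-ish geometric waiting times, or by defining the phase $j$ to be the set of times when $2^{j} \le d(x,z) < 2^{j+1}$, bounding the expected time spent in each of the $O(\log n)$ phases by $O(1)$, and summing.

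The main obstacle I expect is making the ``halving shell has a constant fraction of the mass'' step fully airtight under Definition~\ref{def:bdd}, since that definition only gives the doubling inequality when the hypothesis $B_r(u) \subseteq B_{2r}(u)$ holds, i.e. under a mild regularity condition on radii, and one must be careful that the ball around $z$ of radius $d(x,z)/2$ is comparable in size to the ball around $x$ of radius $d(x,z)$ — this requires chaining the doubling inequality through $z$ and $x$ while tracking that the relevant radii are at least $1$ (so the ``$r \geq 1$'' hypothesis is met) and that the nesting conditions are satisfied, possibly at the cost of enlarging constants or treating the last $O(1)$ steps near $z$ separately by hand. A secondary subtlety is the conditioning: one needs that the event ``$x$'s best-yet edge lands in the target shell'' remains lower-bounded by $\delta$ given everything the walk has seen before reaching $x$ for the first time, which follows from independence across vertices but should be stated explicitly. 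Once those two points are handled, the rest is the by-now routine Kleinberg potential/phase argument.
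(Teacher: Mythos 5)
Your proposal matches the paper's own argument essentially step-for-step: the same phase decomposition on $d(x,z)$, the observation (via exchangeability) that the best-yet record within a ball around $x$ is uniformly distributed over that ball, the doubling inequality to lower-bound the probability that this record lands in the halving ball $B_{d(x,z)/2}(z)$ by a family-wide constant, the independence of the best-yet sequences across distinct vertices to dispose of the conditioning on the history, and the $O(\log n)$-phase sum. Two minor repairs you already anticipated: the inclusion should read $B_{d(x,z)/2}(z) \subseteq B_{(3/2)d(x,z)}(x)$ rather than $\subseteq B_{d(x,z)}(x)$ (the paper uses exactly the $(3/2)$-dilated ball around $x$), and the residual near the target costs $O(\log n)$ steps rather than $O(1)$ --- the paper cuts off the phase argument at radius about $\log n$ and charges the remaining at most $\log n$ monotone base-graph steps directly.
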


\begin{proof}
  Let $z \in V$ be the target vertex. We follow the standard method:
  divide $V$ into $O(\log n)$ phases, with the $i$-th phase defined as
  the set of vertices $x$ such that $2^{i-1} < |x-z| \leq 2^i$. At a
  vertex $x$ in the $i$-th phase, for $i \geq \log \log n$, let $A$ be
  the event that $x$ has a shortcut to a lower phase, that is
\[
A = \{x \leadsto y : y \in B_{2^{i - 1}}(z)\} = \{x \leadsto B_{2^{i -
    1}}(z)\}.
\]
Let 
\[
w = \underset{v \in B_{(3/2) 2^i}(z)}{\text{argmax}}(X_v^x)
\]
By construction, $x$ has a link to $w$, so $A$ will occur if $w \in
B_{2^{i-1}}(z) \subset B_{(3/2) 2^i}(x)$. That the family has bounded
doubling dimension thus means there is a constant $c$ such that
$B_{(3/2) 2^i}(x) / B_{2^{i-1}}(z) \leq c^2$. Since each vertex in the
larger ball is equally likely to be the most interesting.
\[
\PR(A) \geq {1 \over c^2}
\]
independent of $n$ and $i$. If $A$ does not occur, then in the next
step we are by necessity not further from $z$ (nearest neighbors in
base graph are always connected), and because the edges are chosen
independently, $A$ occurs at the new vertex with at least the same
probability. Therefore the expected number of steps until $A$ occurs,
an upper bound on the number steps in a phase, is at most $c^2$.

For each sufficiently big phase, we thus have a constant bound on the
expected number of steps. Since the destination of the edges at each
vertex are independent of the previous path taken by the query, it
follows that the expected number of steps in such phases is at most
the sum over all of them, which is $O(\log n)$. Only $O(\log n)$
points in smaller phases remain, so the result holds.
\end{proof}

\section{The Double Clustering Model}

Our main model of interest is conceptually similar to the independent
interest model of the last section, but rather than letting each
vertex' interest in each other vertex be an independent random
variable, we let each vertex also live in a second space, and let the
interest between two vertices be their proximity in that space. In a
social network, this would can be represented by each individual not
only living somewhere in the physical world, but also having some
position in a less clearly defined ``space of interests'' (his job,
activities, etc.). People who live close to one another tend to become
acquainted by ``default'', while people befriend those far away only
if they interests that agree to some extent.

In the constructed graph, each vertex is thus connected to every other
vertex that is at least as ``interesting'' as any other that is at
most as ``far away''. Formally, let a distance function be a real
valued kernel $d(x,y)$ such that $d(x,x) = 0$ and $d(x,y) + d(y, z)
\geq d(x, z)$ but which is not necessarily symmetric. The most general
definition of such a graph is then:
\begin{definition}\emph{(The Double Clustering Graph)}
  Let $\{x_i\}_{i = 1}^n$ and $\{y_i\}_{i = 1}^n$ be set two sets of
  points in possibly different spaces $M_1$ and $M_2$ with distance
  functions $d_1$ and $d_2$ respectively. The graph $G = (V,E)$ is
  constructed as follows:
  \begin{itemize}
  \item $V = \{1,2,\hdots,n\}$.
\item $(i,j) \in E$ if for all $k \in V$, $k \neq i,j$:
\[
 d_1(x_i ,x_k) < d_1(x_i, x_j) \Rightarrow
  d_2(y_i,y_k) \geq d_2(y_i, y_j)
\]
  \end{itemize}
\label{def:gdc}
\end{definition}

\begin{figure}
  \centering
  \resizebox{4.5in}{!}{\includegraphics{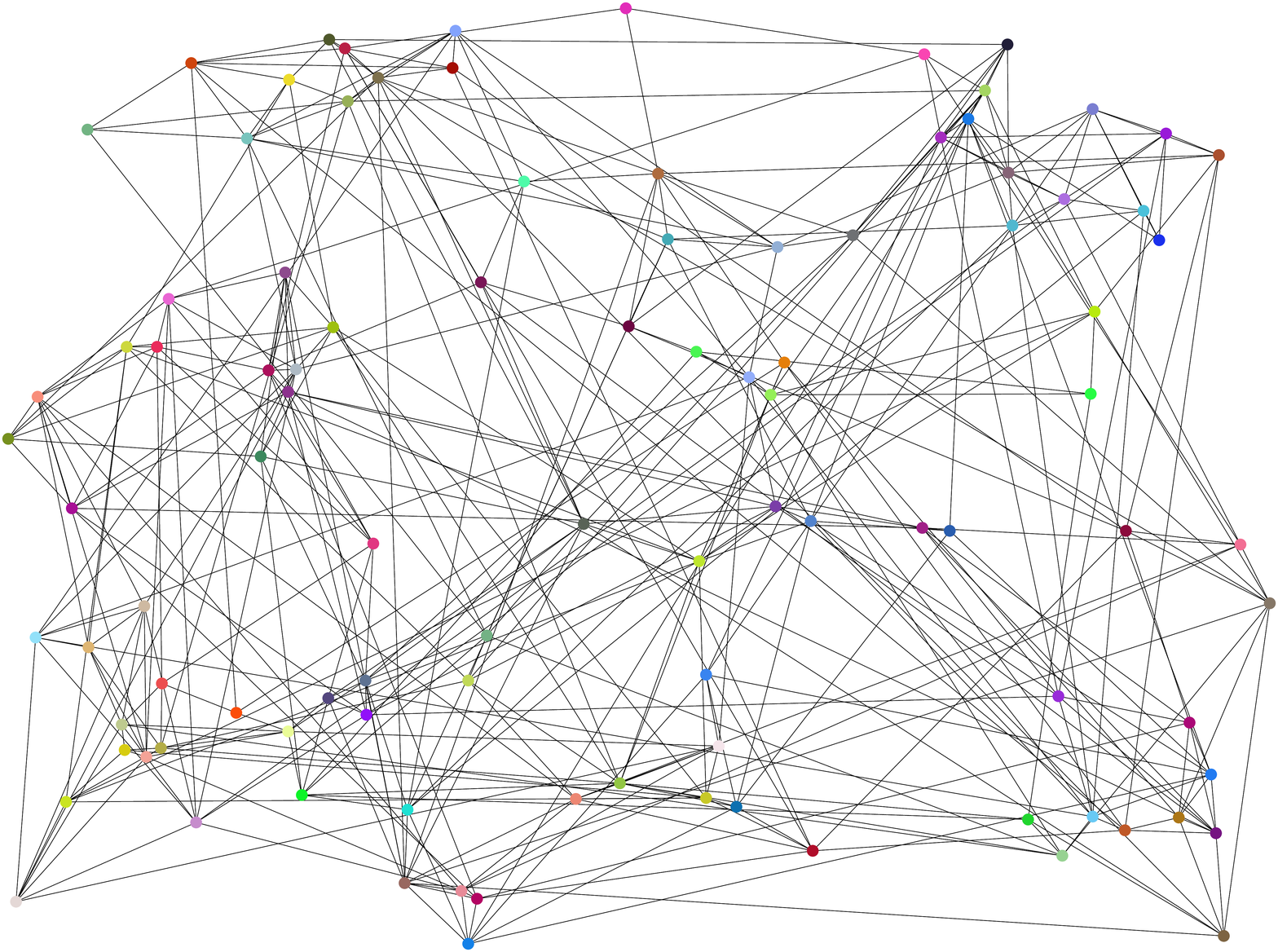}}
  \caption{A double clustering graph of 100 vertices. Each vertex has a
    random position in a two-dimensional physical space ($[0,1.33]
    \times [0,1]$), as well as a in a three-dimensional color space
    (RGB) ($[0,1]^3$), both using Euclidean distance.}
  \label{fig:dc100}
\end{figure}

Note that the two sequences are symmetric in the definition, and that
$G$ contains a nearest neighbor graph for both point sets. If, in
particular, we let the $x_i$ and $y_i$ be the vertices of two graphs
$G_1$ and $G_2$, letting $d_1$ and $d_2$ be graph distance, we may see
the construction as an augmentation of either one to create a denser
graph.

Since we are interested in probabilistic models, we want to let
$(x_i)$ and $(y_j)$ be random points. One way of doing this is to let
$\pi$ be a random permutation of $[n]$, and then letting $y_i =
x_{\pi(i)}$. In the graph case, this corresponds to:

\begin{definition}\emph{(Random Double Clustering Graph)}
  For a vertex set $V$, let $G_1 = (V,E_1)$ and $G_2 = (V, E_2)$ be a
  given graphs. Let $\pi$ be a random permutation of $V$, and
  construct $G' = (V, E')$ by letting $(u,v) \in E'$ if for all $w \in
  V$, $w \neq u, v$:
\[
d_1(u, w) < d_1(u, v) \Rightarrow d_2(\pi(u), \pi(w)) \geq d_2(\pi(u), \pi(v))
\]
where $d_1$ and $d_2$ graph distances in $G_1$ and $G_2$ respectively.
\label{def:rdc}
\end{definition}

Note that every edge added in the construction has a direction, though
in many cases (such as nearest neighbors in $G_1$ and $G_2$) edges in
both directions will be included. One may choose to see the resulting
graph as undirected by simply removing directionality and duplicated
edges. For the sake of bounding the routing time, it is advantageous
to preserve directionality and route using only outgoing edges.

In light of this, and before proceeding to analysis, we note that the
construction $G'$ works equally well if $G_1$ and $G_2$ are directed
graphs.

\section{Analysis of Double Clustering}

We will analyze special cases of Definition \ref{def:rdc}. We start by
proving that greedy routing takes only $O(\log n)$ steps in
expectation when we construct a double clustering graph using two
directed cycles. Augmenting a directed cycle is the most basic form of
Kleinberg type navigability, and has been extensively investigated in
the case of independent augmentation (see e.g.
\cite{barriere:efficient}), but of course is not a good model for most
real world scenarios.

More general models, in particular where the first space is not
directed, are more complicated. In this case the probability of
finding a link that halves the distance to the destination is not
independent of the previous route taken. This can be seen in the
simulations below, where double clustering graphs have slightly longer
greedy paths than the equivalent independent interest graphs, though
seemingly only by a constant. We attempt an analysis of one class of
such models, where the first graph may take a more general form, and
the second is an \emph{undirected} cycle (in particular, this includes
the case of two undirected cycles), but to do so we are forced to
modify the routing used somewhat. The resulting algorithm is still a
form of decentralized routing by Kleinberg's definition. Using this,
we are able to show that routing takes a polylogarithmic number of
steps, a somewhat worse bound than what we expect is true.

We conjecture that double clustering can be applied to just about any
graph (see the conclusion), but can not yet prove it.

\subsection{Two Directed Cycles}

Let $G_1$ and $G_2$ in Definition \ref{def:rdc} be two cycles of $n$
points, that is the directed graphs with vertex set $V_1 = V_2 =
\{0,1,\hdots,n-1\}$ and both $E_i$ containing an edge from $u$ to $u +
1$ (modulo $n$) for each $u \in V_i$. We will refer to this special
case as the Double Cycle Graph. It constitutes the simplest case of
double clustering.

Below, $d(x,y) = y - x \mod n$ will be graph distance in the cycles,
and $d_\pi$ will be corresponding distance function on the permuted
cycle ($d_\pi(x,y) = d(\pi(x), \pi(y))$). We will discuss greedy
routing using $d$, but by symmetry the same results hold for $d_\pi$.

Note from the definition that $G$ contains a link to the point $y$
such that $d(x,y) = 1$ (the next vertex in the cycle) and the point
$z$ such that $d_\pi(x,z) = 1$ (the next vertex in the permuted
cycle). 

Addition of vertex values below is always modulo $n$, but the notation
is suppressed for readability.

\begin{lemma}
  For $w \neq z \in V$, let $w' \in V$ be the vertex that $w$ routes to
  when $d$-greedy routing for $z$. Then:
  \begin{itemize}
  \item $w'$ lies inclusively between $w + 1$ and $z$ in the cycle
    (that is $d(w',z) < d(w,z)$).
  \item $w'$ lies inclusively between $w + 1$ and $z$ in the
    permuted cycle ($d_\pi(w',z) < d_\pi(w,z)$).
  \end{itemize}
\end{lemma}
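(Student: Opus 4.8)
I would make the out-neighbour structure of a single vertex completely explicit and then read both claims off of it. Fix $w$ and reorder the vertices by their $d$-distance from $w$: set $v_j := w + j$ (all addition mod $n$), so that $d(w, v_j) = j$, and, writing $m := d(w,z)$, we have $z = v_m$ with $1 \le m \le n-1$. Reading Definition~\ref{def:rdc} for two directed cycles, $(w, v_j) \in E'$ precisely when $d_\pi(w, v_j) < d_\pi(w, v_i)$ for all $1 \le i < j$; that is, the out-neighbours of $w$ are exactly the strict prefix minima of the sequence $d_\pi(w, v_1), d_\pi(w, v_2), \ldots$. Two facts to record at the outset: $v_1 = w+1$ is always an out-neighbour (its condition is vacuous), so greedy routing is never stuck; and because $\pi$ is a permutation the numbers $d_\pi(w, v_1), \ldots, d_\pi(w, v_{n-1})$ are $n-1$ distinct nonzero residues, so there are no ties anywhere in the argument.

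For the first bullet: greedy routing from $w$ toward $z$ selects the out-neighbour $w'$ minimising $d(\cdot, z)$. For an out-neighbour $v_j$ with $j \le m$ one has $d(v_j, z) = m - j$, whereas for $j > m$ one has $d(v_j, z) = n - (j-m) \ge m+1$; since $v_1$ is an available out-neighbour with $d(v_1,z) = m-1$, the minimiser has index at most $m$, so $w' = v_{j^\ast}$ with $j^\ast := \max\{\, 1 \le j \le m : (w,v_j) \in E'\,\}$, and $d(w',z) = m - j^\ast \le m-1 < d(w,z)$. The positioning claim is then one line of modular bookkeeping: $d(w,w') + d(w',z) \equiv d(w,z) \pmod n$ with both sides in $[0, 2n-2]$, and the alternative $d(w,w') + d(w',z) = d(w,z)+n$ would force $d(w',z) \ge d(w,z)+1$, contradicting the above; hence $d(w,w')+d(w',z) = d(w,z)$ with $d(w,w') \ge 1$, i.e.\ $w'$ lies inclusively between $w+1$ and $z$ on the cycle.

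For the second bullet --- the crux --- the point is that $j^\ast$ being the \emph{last} prefix-minimum index not exceeding $m$ forces $d_\pi(w, v_{j^\ast}) = \min_{1 \le i \le m} d_\pi(w, v_i) \le d_\pi(w, v_m)$: none of $v_{j^\ast+1}, \ldots, v_m$ is a prefix minimum, so each has $d_\pi(w,\cdot)$ value at least the running minimum, which is $d_\pi(w, v_{j^\ast})$. Write $a := d_\pi(w, w')$ and $D := d_\pi(w, z)$, so $1 \le a \le D \le n-1$ (the lower bound because $w' \ne w$). Then $d_\pi(w', z) = \pi(z) - \pi(w') \equiv D - a \pmod n$ has representative exactly $D - a \in \{0, 1, \ldots, D-1\}$, whence $d_\pi(w', z) = D - a < D = d_\pi(w, z)$; simultaneously $d_\pi(w,w') + d_\pi(w',z) = a + (D - a) = D$ with $d_\pi(w,w') = a \ge 1$, which says $w'$ lies inclusively between $w+1$ and $z$ in the permuted cycle too.

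I expect the only step needing care to be this identification in the second bullet: recognising that the greedy choice is the last record minimum of the permuted distances over the initial arc up to $z$, and that a last record minimum is automatically the minimum over that whole stretch. Granted that, neither conclusion uses anything about the distribution of $\pi$ --- only that it is a bijection --- and the rest is the elementary observation that subtracting a positive residue $a \le D$ from $D$ modulo $n$ lands strictly below $D$. The vacuous-condition remark making $w+1$ an out-neighbour, and the $[0, 2n-2]$ range argument for cycle positioning, are routine.
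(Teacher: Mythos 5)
Your proof is correct and rests on the same structural facts as the paper's, differing only in presentation: the paper argues the second bullet by contradiction, assuming $d_\pi(w,w') > d_\pi(w,z)$ and producing a vertex $q$ on the arc from $w'+1$ to $z$ that $w$ links to and which greedy routing would prefer, whereas you argue directly by making the prefix-minimum (record) structure of the out-neighborhood explicit and observing that the greedy choice $w' = v_{j^\ast}$ is the last record on $\{v_1,\dots,v_m\}$, hence the global minimizer of $d_\pi(w,\cdot)$ over that arc, giving $d_\pi(w,w') \le d_\pi(w,z)$ outright. Both are sound; your direct version is a cleaner packaging of the same idea and makes explicit the "records of an exchangeable sequence" viewpoint that motivates the paper's model in the first place.
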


\begin{proof}
  The first statement is obvious from the definition of greedy
  routing, and the fact that $w \leadsto w + 1$ so there is always a
  choice which approaches $z$.

  To prove the second statement, assume that $w'$ is not between $w+1$
  and $z$ in the permuted cycle. This means that $d_\pi(w, z) <
  d_\pi(w, w')$. Let $A$ be the set of points inclusively between $w'
  + 1$ and $z$ (that is $A = \{w'+1,w'+2,\hdots,z\}$).  Define $q$ as the
  first point in $A$ such that $d_\pi(w, q) < d_\pi(w, w')$, noting
  that at least one such point, $z$, exists. But by construction, and
  since $w \leadsto w'$, $q$ must be closer to $w$ in the permuted
  cycle than any vertex between $w$ and itself, and thus $w \leadsto q$.
  But if this were the case, $w$ would have routed to $q$ and not
  $w'$, which is a contradiction.
\end{proof}

\begin{corollary} 
  For any permutation $\pi$, a $d$-greedy path from any vertex $y$ to
  any other $z$ in the double cycle monotonically approaches $z$ in
  $d_\pi$, likewise a $d_\pi$-greedy path monotonically approaches $z$
  in $d$.
\label{co:mon}
\end{corollary}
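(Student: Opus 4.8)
The plan is to derive this as an immediate consequence of the Lemma by induction on the length of the greedy path. Consider a $d$-greedy path $y = w_0, w_1, w_2, \ldots, w_m = z$ from $y$ to $z$. The Lemma, applied at each step with $w = w_j$ and $w' = w_{j+1}$, tells us precisely that $d_\pi(w_{j+1}, z) < d_\pi(w_j, z)$ (the second bullet point), provided $w_j \neq z$, which holds for all $j < m$. So first I would simply chain these strict inequalities together:
\[
d_\pi(w_m, z) < d_\pi(w_{m-1}, z) < \cdots < d_\pi(w_1, z) < d_\pi(w_0, z),
\]
which is exactly the statement that the path monotonically approaches $z$ in the $d_\pi$ metric.

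For the second half of the Corollary, I would invoke the symmetry of the double clustering construction noted in the text after Definition \ref{def:rdc} and again in the setup for this subsection (``by symmetry the same results hold for $d_\pi$''). Concretely, the roles of $G_1$ and $G_2$ — equivalently, of $d$ and $d_\pi$ — are interchangeable in Definition \ref{def:gdc}, since the edge condition is symmetric in the two point sequences. Hence the Lemma holds verbatim with $d$ and $d_\pi$ swapped: a $d_\pi$-greedy step from $w$ to $w'$ toward $z$ satisfies $d(w', z) < d(w, z)$. Chaining as before gives that a $d_\pi$-greedy path monotonically approaches $z$ in $d$.

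I do not expect any serious obstacle here — the Corollary is genuinely just the telescoped form of the Lemma plus the construction's symmetry. The only point requiring a word of care is that the monotonicity is \emph{strict} at every step and that the inductive chaining is legitimate, which relies on the fact (used in the Lemma's proof) that $w \leadsto w+1$ always holds in the double cycle, so a greedy path never stalls and indeed reaches $z$ in finitely many steps; each intermediate vertex is distinct from $z$, so the Lemma's hypothesis $w \neq z$ is met at every step before the last. Thus the argument is complete.
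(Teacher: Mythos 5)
Your proposal is correct and follows essentially the route the paper intends: the Corollary is stated without a separate proof precisely because it is the telescoped form of the preceding Lemma (chaining the second bullet's strict inequality along the greedy path), with the $d_\pi$-greedy half obtained by the symmetry of the construction that the paper already flags (``by symmetry the same results hold for $d_\pi$'').
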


In light of the corollary, it might seem that greedy routing with
respect to $d$ and $d_\pi$ would produce the same paths. In fact, this
is not the case, which we prove as an aside:

\begin{lemma}
  There exists a permutation $\pi$ such that greedy routing from some
  vertex $y$ to some vertex $z$ with respect to $d$ and $d_\pi$
  produces different paths.
\end{lemma}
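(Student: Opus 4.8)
The plan is to exhibit an explicit small example. The statement only claims existence of \emph{a} permutation, \emph{a} source, and \emph{a} target producing different greedy paths with respect to $d$ and $d_\pi$, so there is no need for an asymptotic or probabilistic argument — a hand-built instance on a cycle of modest size suffices. First I would fix a convenient $n$ (likely in the range $n = 6$ to $10$) and write out the permutation $\pi$ as a table $u \mapsto \pi(u)$. Then I would determine, from Definition \ref{def:rdc} specialized to two directed cycles, exactly which long-range edges leave the relevant vertices: recall that $u \leadsto v$ iff every $w$ with $d(u,w) < d(u,v)$ satisfies $d_\pi(u,w) \geq d_\pi(u,v)$, i.e.\ scanning the vertices $u+1, u+2, \ldots$ in cycle order, $u$ connects to $v$ precisely when $v$ is a running minimum of $d_\pi(u,\cdot)$ along that scan. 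So for each vertex I would list its out-neighbors as the record-minima of the permuted distance.

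Next I would pick a target $z$ and a source $y$ and simulate both greedy walks. For $d$-greedy routing from $y$, at each current vertex $w$ we look at all out-neighbors (the cycle-successor $w+1$ plus the long-range record-minima computed above) and move to the one minimizing $d(\cdot, z)$; for $d_\pi$-greedy routing we instead move to the out-neighbor minimizing $d_\pi(\cdot, z)$. The goal is to arrange the permutation so that at the very first step the two criteria already disagree: the source $y$ should have (at least) two long-range out-neighbors, say $v_1$ and $v_2$, with $d(v_1,z) < d(v_2,z)$ but $d_\pi(v_1,z) > d_\pi(v_2,z)$, so that $d$-greedy picks $v_1$ and $d_\pi$-greedy picks $v_2$, and these genuinely lead to different vertices (hence different paths). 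By Corollary~\ref{co:mon} both walks still terminate at $z$, but having diverged at step one they are distinct paths.

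The main obstacle — really the only work — is the bookkeeping: choosing $\pi$ so that a single vertex $y$ has two out-edges whose $d$-order and $d_\pi$-order are reversed, while keeping $n$ small. This is a finite search, easily done by hand: one wants $y$'s scan $y+1, y+2, \ldots$ in cycle order to hit two successive permuted-distance records $v_1$ (found first, nearer $z$ in the cycle) and $v_2$ (found later, so with strictly smaller permuted distance, but farther from $z$ in the cycle). I expect a clean instance to drop out quickly; I would present the chosen $\pi$, tabulate the out-neighbors of $y$, and then just display the two one-line greedy walks side by side to show they differ, which completes the proof. No step here is deep — the lemma is an ``aside'' precisely because it is a concrete counterexample rather than a structural result.
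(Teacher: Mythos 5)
Your approach is the same as the paper's. The paper also argues by arranging a vertex $y$ with two out-neighbors $x_1, x_2$ lying between $y$ and $z$ in both cycles, but in opposite relative order, so that $d$-greedy picks the $d$-closer one while $d_\pi$-greedy picks the $d_\pi$-closer one, and the paths diverge at step one. The only difference is presentational: the paper stops at describing the required configuration abstractly (``let $\pi$, $y$, $z$ be such that\ldots'') without writing out a numerical instance, whereas you propose to exhibit a fully concrete $n$, $\pi$, $y$, $z$ and tabulate the out-edges. That is a fine (arguably more careful) way to finish, and such a configuration is easy to realize, e.g.\ cycle order $y, x_1, x_2, z$ adjacent, permuted order $y, x_2, x_1, z$ adjacent, identity elsewhere. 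One small point to keep straight when you flesh it out: it is not enough that $y$ has two out-neighbors with reversed orderings; you also want no \emph{other} out-neighbor of $y$ to beat both of them (in particular $y \not\leadsto z$), which is exactly what the paper's condition that $x_1, x_2$ are the \emph{only} vertices between $y$ and $z$ in both cycles buys you, together with Corollary~\ref{co:mon} to rule out $d_\pi$-greedy jumping to something outside the $[y,z]$ arc.
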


\begin{proof}
  Let $\pi$, $y$ and $z$ be such that there are exactly two vertices
  $x_1$ and $x_2$ that lie between $y$ and $z$ in the cycle ($d(y,z) >
  d(x_1,z) > d(x_2,z)$), and also lie between $y$ and $z$ in the
  permuted cycle. Let $x_1$, $x_2$ appear in the opposite order the
  permuted cycle ($d(y,z) > d_\pi(x_2,z) > d_\pi(x_1,z)$).

  Note that by construction, $y$ will have edges to both $x_1$ and
  $x_2$ in the double cycle graph, because $x_1$ is closer in the
  $d_\pi$ then any $d$ closer point to $y$, and likewise for $x_2$ (in
  particular, it is closer to $y$ than $x_1$ in $d_\pi$). However,
  when greedy routing with respect to $d$ for $z$, $y$ will choose
  $x_2$, while when greedy routing with respect to $d_\pi$, it will
  choose $x_1$.
\end{proof}

Marginally, under a uniform random choice of $\pi$, the probability
that $x \leadsto y$ in the double cycle model is exactly $1 / d(x,y)$
as it should be for navigability. However, like in the all the double
clustering graphs, the random edges are not formed independently, so
the situation is different from previous results. We will see,
however, that in the case of a the double cycle, the monotonicity of
the routing path also in $d_\pi$, as proved above, makes the routing
events independent (in a sense which will be shown precisely below):
the knowledge provided by previous routing steps is always ``behind
us'' in the permuted cycle.

\begin{theorem}
  For any two points $y, z \in [n]$, the greedy path from
$y$ to $z$ in the double cycle graph formed by a uniformly random
permutation $\pi$ has expected length $O(\log n)$.
\end{theorem}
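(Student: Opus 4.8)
The plan is to follow the standard phase decomposition used in Theorem~\ref{th:iimod}, but to handle the dependence between random edges by exploiting Corollary~\ref{co:mon}. Fix the target $z$, and for a greedy query at the current vertex $x$ with $d(x,z) = m$, say the query is in phase $i$ if $2^{i-1} < m \le 2^i$. In a single step, the query succeeds in halving $d(\cdot,z)$ if $x$ has a double-clustering edge to some $y$ with $d(y,z) \le m/2$. By the marginal computation noted just before the theorem, $\PR(x \leadsto y) = 1/d(x,y)$, and there are $\Theta(m)$ candidate targets $y$ in the interval between $x$ and the midpoint, so a routine sum shows that $\PR(x \text{ halves } d(\cdot,z)) \ge \beta$ for some constant $\beta > 0$ independent of $n$ and $i$ — provided this probability can be taken \emph{conditionally} on everything the query has already learned.

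The key step is the independence argument, and here is where the double-cycle structure does the work. When the query is at $x$, the information it has revealed so far is the out-edge sets of the previously visited vertices $w_0 = y, w_1, \dots, w_k = x$. The edge set of $w_j$ is determined by the \emph{relative order} under $\pi$ of $w_j$ against the cyclically closer vertices; by Corollary~\ref{co:mon}, the greedy path is monotone in $d_\pi$ toward $z$, so each $w_j$ with $j < k$ satisfies $d_\pi(w_j, z) > d_\pi(x,z)$, i.e. every previously visited vertex lies strictly ``behind'' $x$ in the permuted cycle relative to the $z$-direction. I would make precise the claim that, conditioned on all revealed data, the relative $\pi$-order of $x$ against the vertices that matter for the event ``$x$ halves $d(\cdot,z)$'' (namely the vertices cyclically between $x$ and $z$, none of which have been visited, and whose $\pi$-positions relative to $x$ in the $z$-ward direction have not been constrained by the behind-us information) is still uniform. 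Concretely: reveal $\pi$ sequentially in the order the query needs it; the positions already exposed all concern vertices on the far side of $x$ in $d_\pi$, so the conditional distribution of $\pi$ restricted to the ``ahead'' vertices is exchangeable, and the $1/k$-type order statistics facts quoted in Section~3 apply verbatim to give $\PR(A \mid \text{history}) \ge \beta$.

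Granting the conditional bound, the rest is bookkeeping exactly as in Theorem~\ref{th:iimod}: within a phase the number of steps is stochastically dominated by a geometric variable with success probability $\ge \beta$ (since a non-halving step never increases $d(\cdot,z)$ — the next-in-cycle edge is always available — and the conditional success probability at the new vertex is again $\ge \beta$), so each of the $O(\log n)$ phases costs $O(1)$ expected steps, and the total expected path length is $O(\log n)$. I expect the main obstacle to be stating and justifying the conditional-independence claim cleanly: one must specify an order in which to expose $\pi$, verify that the event $A$ at $x$ depends only on not-yet-exposed coordinates whose conditional law is exchangeable, and confirm that monotonicity in $d_\pi$ (Corollary~\ref{co:mon}) really does guarantee that all previously exposed coordinates concern the ``behind'' region. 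Everything else — the marginal edge probability, the harmonic-sum lower bound on $\beta$, and the geometric-domination argument — is routine.
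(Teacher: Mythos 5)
Your high-level plan is the same as the paper's: phase decomposition, use Corollary~\ref{co:mon} to control the dependence introduced by the revealed history, and argue a constant conditional lower bound on the probability of a halving step. The difference is in how the conditional-independence step is carried out, and your version has the gap that you yourself flag. Your sketch asserts that the $\pi$-positions of the vertices cyclically between $x$ and $z$ ``have not been constrained by the behind-us information,'' but this is not correct as stated: conditioning on the path $x_1 \leadsto \cdots \leadsto x_k \leadsto x$ \emph{does} constrain those positions. For each $i$, no vertex $w$ strictly between $x_{i+1}$ and $z$ can satisfy $d_\pi(x_i,w) < d_\pi(x_i,x_{i+1})$ (otherwise $x_i$ would have had an out-neighbor $d$-closer to $z$ than $x_{i+1}$ and the greedy choice would have been different). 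What saves the argument is that this constraint treats all the ``ahead'' vertices symmetrically, so the conditional law of $\pi$ restricted to them remains exchangeable; but you would have to state and prove exactly this, not the stronger and false claim that those coordinates are unconstrained.

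The paper handles this by a cleaner device than sequential revelation. It splits $\{x+1,\dots,z\}$ into two equal halves $R$ (near) and $H$ (far), sets $A = \{d_\pi(x,H) < d_\pi(x,R)\}$, and exhibits an explicit involutive bijection on permutations (swap the $\pi$-positions of $R$ with those of $H$) which fixes the history event $B$, using Corollary~\ref{co:mon} exactly to verify that the swap does not disturb the greedy choices at any $x_i$, while toggling $A$. This gives $\PR(B\cap A) = \PR(B\cap A^c)$ and hence $\PR(A\mid B) = 1/2$ exactly. A secondary point: the observation that $x\leadsto H$ iff the $d_\pi$-minimum over $R\cup H$ lies in $H$ makes the constant $1/2$ immediate; your route through the marginal $\PR(x\leadsto y)=1/d(x,y)$ and a harmonic sum is not wrong in spirit, but the events $\{x\leadsto y\}$ for different $y$ overlap, so you would have to be careful that the sum actually yields a constant lower bound rather than just citing it as routine.
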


\begin{proof}
  The proof method is the same as in Theorem \ref{th:iimod}, thus we
  will consider starting in a point $x$ such that $r > d(x,z) \geq
  r/2$ and bound the expect number of steps (conditioned on the
  earlier path) until the route is within $r/2$ of $z$.

  Divide the vertices between $x$ and $z$ in the cycle into two
  equal sized sets $R$ and $H$, so that if $d(x,z)$ is odd
  \begin{eqnarray*}
    R = \{x+1,x+2, \hdots, x + {d(x,z) + 1) \over 2 }\} \\
    H = \{x+ {d(x,z) + 1 \over 2} + 1, \hdots, z - 1, z\}.
  \end{eqnarray*}
  If $d(x,z)$ is even, we let $R$ end at $x + (d(x,z) / 2)$ and $H$ go
  from there to $z-1$ so that $R$ and $H$ retain the same size.

  Note that if $x \leadsto H$, then we can route to a point with
  distance to $z$ less than $r/2$, and that
\[
\PR(x \leadsto H) = \PR(d_\pi(x,H) < d_\pi(x,R)) = 1/2
\]
where $d_\pi(x,S)$ means the minimal distance from $x$ to any point in
the set $S$.

Let $A$ be the event that $d_\pi(x,H) < d_\pi(x,R)$, and $B$ be the
event that before reaching $x$ we greedy routed along the path 
\[
x_1 \leadsto x_2 \leadsto \hdots \leadsto x_k \leadsto x
\]
for some $k$ and sequence of vertices where $d(x_i, z) < d(x, z)$. We
will show that $\PR(B \cap A) = \PR(B \cap A^c)$, which (since $P(A) =
P(A^c)$) implies that $\PR(B \given A) = \PR(B \given A^c)$ and thus
that $A$ and $B$ are independent.

To do this, we define a bijection between the set of permutations $B
\cap A$ and $B \cap A^c$. For a given $\pi \in B \cap A$, let $\pi'$
be $\pi$ composed with a permutation that flips the positions of the
elements in $R$ and $H$. Clearly, if $\pi \in A$, then $\pi' \in A^c$.

By Corollary \ref{co:mon} $d_\pi(x_i,z) > d_\pi(x,z)$ for all the
$x_i$ in the definition of $B$. This means that all the vertices in $R
\cup H$ are further from each $x_i$ than $x$ in both $d$ and $d_\pi$.
Thus the internal order of vertices in $R \cup H$ can not affect the
edges of the $x_i$, and if $\pi \in B$, then $\pi' \in B$ as
well. It follows that $|B \cap A| = |B \cap A^c|$, whence
\[
\PR(A \given B) = \PR(A) = 1/2
\]
for any $B$ defined as above. At each vertex we reach at distance
between $r$ and $r/2$ to $z$, the probability of having a link to a
vertex with distance less than $r/2$ is thus greater than $1/2$
regardless of which vertices we visited previously. The result now
follows as in Theorem \ref{th:iimod}.
\end{proof}



\subsection{Bounded Doubling Dimension and an Undirected Cycle}

In this section, we let $G_1$ belong to a more general family meeting
the criteria of Definition \ref{def:bdd}, and we let $G_2$ be an
undirected cycle (a one-dimensional toric grid). Like in previous
cases, we shall bound the expected number of steps that it takes to
halve the distance to the destination: however, unlike in previous
cases, the event of halving the distance in each step of greedy
routing is not independent of the previous path.

In order to control the dependencies between the edges encountered at
each step, we introduce a modified routing algorithm we call
\emph{half-greedy routing}. When routing for a vertex $z$ and
currently at $x$, we examine each of $x$'s neighbors in the double
clustering graph $G$. If any neighbor $w$ is such that $d_1(x, z) > 2
d_1(w, z)$, then $w$ is chosen for the next step. If no such such $w$
is found, $x$ routes to a neighbor $w'$ in $G_1$ such that $d_1(w', z)
= d_1(x,z) - 1$ (choosing from all possible such $w'$ by some
deterministic rule).

Half-greedy routing thus either takes a ``very big step'', which
immediately halves the distance, or a very small step to the next
vertex in $G_1$. Intuitively, one may imagine this as a participant in
a Milgram style experiment only bothering to send the letter by post
if he knows somebody very suitable, and otherwise just giving it
directly to one of his neighbors. The analytical advantage of this
approach is that while subsequent vertices reached by a greedy route
do not have independent positions in $G_2$, neighbors in $G_1$
(nearly) do. The navigability result thus follows from this lemma:

\begin{lemma}
  Let $\pi$ be a random permutation of $[n]$ and $d_\pi$ be circular
  distance under this permutation. That is, for $x, y \in [n]$
\[
d_\pi(x, y) = \min(|\pi(x)-\pi(y)|, n - |\pi(x) - \pi(y)|)
\]
Let $A$ and $B$ be disjoint subsets $[n]$, such that $|A| = k$ and
$|B| \geq q k$ for some $q > 0$. The elements of $A$ are enumerated $a_1,
a_2, a_3, \hdots, a_k$. Define a random variable $\tau$ by
\[
\tau = \min(t \geq 0 : d_\pi(a_t, A \backslash \{a_t\}) \geq d_\pi(a_t, B))
\]
or $\tau = k$ if this never occurs. Then, for $t < k/5$
\[
\PR(\tau \geq t) \leq e^{-m t}
\]
where $m = m(q) < \infty$, a constant independent of $n$ and $k$.
\label{lm:draw}
\end{lemma}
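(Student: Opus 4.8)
The plan is to show that at each "time" $t$, conditioned on the positions $\pi(a_1),\dots,\pi(a_{t-1})$ of the elements examined so far, the element $a_t$ has a decent (constant) chance of being closer in $d_\pi$ to some element of $B$ than to any remaining element of $A$, and that this chance does not degrade as long as $t$ is not too close to $k$. Then $\tau$ is stochastically dominated by a geometric random variable and the exponential tail follows. First I would set up the conditioning carefully: think of building the permutation by revealing the circular positions $\pi(a_1), \pi(a_2),\dots$ one at a time, and also — crucially — the positions $\pi(b)$ for $b \in B$. The natural thing is to reveal all of $\pi(B)$ up front (or equivalently condition on it), since that only helps: once $\pi(B)$ is fixed, the $|B| \geq qk$ points of $B$ carve the circle of $n$ positions into at most $|B|$ arcs, and a uniformly random position among the remaining $n - |B|$ free slots lands within distance, say, $\ell$ of some $B$-point with probability at least roughly $\min(1, 2\ell|B|/n)$ — for $\ell$ a small multiple of $n/|B|$ this is a constant $p_0 = p_0(q)$.

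The key step is then the following claim: at time $t$, let $\ell_t$ be the distance from $a_t$'s (about-to-be-revealed) position to the nearest $B$-point; we want $d_\pi(a_t, A\setminus\{a_t\}) \geq \ell_t$, i.e. none of the other $k-1$ elements of $A$ sits within distance $\ell_t$ of $a_t$. The elements $a_1,\dots,a_{t-1}$ are already placed, and if one of them is close this is bad — but there are at most $t-1 < k/5$ of them, so with $\ell_t \asymp n/|B| \leq n/(qk)$ the expected number of already-placed $A$-elements within distance $\ell_t$ of $a_t$ is at most about $\frac{2\ell_t (t-1)}{n} \leq \frac{2(t-1)}{qk} < \frac{2}{5q}$, and the elements $a_{t+1},\dots,a_k$ are still free and each lands within distance $\ell_t$ with probability $\leq 2\ell_t/(n-|B|-t)$, summing to another $O(1/q)$ term. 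Choosing the constant in $\ell_t \asymp n/|B|$ small enough (shrinking $p_0$ correspondingly), a union bound / first-moment argument makes the probability that $a_t$ succeeds at least some constant $p_1 = p_1(q) > 0$, uniformly in $t < k/5$ and in the past. Hence $\PR(\tau \geq t) \leq (1-p_1)^{\lfloor t \rfloor} \leq e^{-mt}$ with $m = -\log(1-p_1) > 0$, as required; the restriction $t < k/5$ is exactly what keeps the "bad already-placed elements" term bounded.

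The main obstacle I expect is handling the conditioning cleanly: the event $\{\tau \geq t\}$ depends on the placements of $a_1,\dots,a_{t-1}$ in a way that is not independent of where the free slots are, and one must be careful that conditioning on $\tau \geq t$ (i.e. on all earlier elements having \emph{failed}) does not secretly correlate the remaining free positions in an adversarial way. The fix is to do everything "position by position" — reveal $\pi(B)$, then reveal $\pi(a_1), \dots, \pi(a_{t-1})$, and observe that whatever these reveal, the position $\pi(a_t)$ is still uniform over the remaining free slots, so the lower bound $p_1$ on $a_t$ succeeding holds conditionally on \emph{any} history, in particular on $\{\tau \geq t\}$. A secondary technical point is that the distances are circular (the $\min$ with $n - |\cdot|$), but this only ever helps the "close to $B$" estimate and costs at most a factor of $2$ in the "close to a bad $A$-element" estimate, so it is absorbed into the constants. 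One should also make sure $\ell_t$ can be taken as a fixed deterministic radius $\ell \asymp n/(qk)$ rather than a random one, which simplifies the union bounds; the event "$a_t$ within distance $\ell$ of $B$ and no other $A$-element within distance $\ell$ of $a_t$" implies $d_\pi(a_t, A\setminus\{a_t\}) \geq \ell > d_\pi(a_t, B)$, which is what $\tau = t$ needs.
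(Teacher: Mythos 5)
Your high-level plan (show a uniform constant lower bound on the chance that $a_t$ "succeeds," hence $\tau$ is dominated by a geometric) matches the paper's, and your rough probability estimates are fine, but the conditioning you propose does not actually close the gap you yourself flag. The event $\{\tau \geq t\}$ is the event that every $a_s$ with $s<t$ \emph{fails}, i.e.\ has some other element of $A$ strictly closer than $B$. That event is \emph{not} measurable with respect to $\sigma\bigl(\pi(B),\pi(a_1),\dots,\pi(a_{t-1})\bigr)$: whether $a_s$ fails depends on where $a_t,\dots,a_k$ land, since any of them could be the $A$-element sitting between $a_s$ and $B$. So showing $\PR(a_t \text{ succeeds} \mid \pi(B),\pi(a_1),\dots,\pi(a_{t-1})) \geq p_1$ does \emph{not} give $\PR(a_t \text{ succeeds} \mid \tau \geq t) \geq p_1$, and the product bound $\PR(\tau\ge t)\le (1-p_1)^{t-1}$ does not follow. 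Concretely, conditioning on $\{\tau\ge t\}$ biases the not-yet-revealed $\pi(a_t),\dots,\pi(a_k)$ toward clustering near $a_1,\dots,a_{t-1}$ (that is what makes those fail), and your fixed reveal order never controls this.

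The paper resolves exactly this by replacing the fixed reveal order with an adaptive, data-dependent one (the ``toy train track'' construction): at stage $t$ one starts at $a_t$ and reveals positions outward in both directions on the cycle until hitting the first non-gray element. Whether that first colored element is in $B$ is an event $E_t$ that \emph{is} measurable in the natural filtration of the construction, and $E_t$ implies $\tau\le t$ directly (all intervening positions are gray). Setting $X=\min\{t: E_t\}$, one gets $X\ge\tau$, and the conditional probabilities $\PR(E_t^c\mid E_1^c,\dots,E_{t-1}^c)$ can be bounded because each failed stage removes at least two ``red'' terminators while adding only one, giving the geometric tail; the $t<k/5$ restriction keeps this bookkeeping positive. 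The key ingredient you are missing is a way to make ``$a_t$ succeeds'' a stopping-time-type event --- some adaptive exposure of $\pi$ --- rather than an event about the full permutation evaluated after a fixed exposure.
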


We will establish this lemma below. First we show how it leads to the
desired result.

\begin{theorem}
  In Definition \ref{def:rdc} let $G_1$ be a connected graph from a
  family with bounded doubling dimension, and $G_2$ be an undirected
  cycle. Then then path though $G$ between any two vertices $x$ and
  $z$ when half-greedy routing with respect to $d_1$ has expected
  length $O(\log^2 n)$.
\label{th:halfgreedy}
\end{theorem}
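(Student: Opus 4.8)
The plan is to run the phase argument of Theorem~\ref{th:iimod}, but with the phases measured in $d_1$ around the target $z$ and with the correlations between consecutive steps neutralised by Lemma~\ref{lm:draw}. Fix $z$, set $r_i = 2^i$, and call $x$ a phase-$i$ vertex if $r_{i-1} < d_1(x,z) \le r_i$; there are $O(\log n)$ phases. Half-greedy routing has a convenient shape here: once the route enters phase $i$ at some vertex $a_1$, it walks along a fixed $G_1$-geodesic toward $z$, dropping $d_1(\cdot,z)$ by one per step, until either it reaches $d_1 = r_{i-1}$ and leaves to phase $i-1$, or some visited vertex owns a big-step neighbour, in which case it jumps to phase $i-1$ or below. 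Writing $a_1,a_2,\dots$ for this geodesic (which has at most $r_{i-1}$ vertices in the phase), the number of steps spent in phase $i$ is at most the first index $t$ at which $a_t$ owns a big-step neighbour. So it suffices to bound, for each phase, the expectation of this first index.

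The main work is to recast ``$a_t$ owns a big-step neighbour'' so that Lemma~\ref{lm:draw} applies to the sequence $a_1,a_2,\dots$. If $x$ is at $d_1$-distance $s$ from $z$, put $B_x = \{w : d_1(w,z) < s/2\}$ and $D_x = B_{3s/2}(x)$, so that $B_x \subseteq D_x$ by the triangle inequality and any $w \in B_x$ has $s/2 \le d_1(x,w) < 3s/2$. A short check against Definition~\ref{def:rdc} shows that if the $d_\pi$-nearest vertex to $x$ in $D_x\setminus\{x\}$ lies in $B_x$, then $G'$ contains the edge from $x$ to it, making it a big-step neighbour; hence it suffices that $d_\pi(x, B_x) \le d_\pi(x, (D_x\setminus B_x)\setminus\{x\})$. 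Now fix phase $i$, let $A = B_{(5/2)r_i}(z) \setminus B_{r_{i-2}}(z)$ enumerated so that the geodesic $a_1,\dots,a_K$ in phase $i$ (in traversal order) comes first, and let $B = B_{r_{i-2}}(z)$. For every phase-$i$ vertex $a_t$ one has $B \subseteq B_{a_t}$ and $D_{a_t}\subseteq B_{(5/2)r_i}(z)$, so $(D_{a_t}\setminus B_{a_t})\setminus\{a_t\}\subseteq A\setminus\{a_t\}$; thus the event $d_\pi(a_t, A\setminus\{a_t\}) \ge d_\pi(a_t, B)$ implies the displayed sufficient condition and so forces a big-step neighbour at $a_t$. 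Bounded doubling dimension gives $|B_{(5/2)r_i}(z)| \le c'|B_{r_{i-2}}(z)|$ for a family-wide $c'$, hence $|B| \ge q|A|$ with $q = 1/(c'-1)$, and also $K \le |A|$. Lemma~\ref{lm:draw}, applied with these $A$ and $B$, therefore gives $\PR(\text{phase } i \text{ lasts} \ge t) \le e^{-mt}$ for all $t < |A|/5$, with $m = m(q) > 0$ independent of $n$ and $i$.

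It remains to sum. If $|A| \le 5C\log n$ for a constant $C$ to be fixed, then phase $i$ lasts at most $K \le |A| = O(\log n)$ steps deterministically. Otherwise the tail bound is valid up to $t = C\log n$, giving $\PR(\text{phase } i \text{ lasts} \ge C\log n) \le e^{-mC\log n} = n^{-mC}$. Choose $C$ with $mC > 2$ and take a union bound over the $O(\log n)$ phases: with probability $1 - o(1/n)$ every phase lasts fewer than $C\log n$ steps, so the whole route has length $O(\log^2 n)$; on the complementary event the route length is at most $n$ (each half-greedy step strictly decreases $d_1(\cdot,z)$), contributing $o(1)$ to the expectation. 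Hence the expected route length is $O(\log^2 n)$.

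The step I expect to be the real obstacle is the dependence between phases. Lemma~\ref{lm:draw} is stated for a uniform $\pi$, and it is precisely what kills the correlation \emph{within} a phase (consecutive geodesic vertices have heavily overlapping relevant balls), but by the time the route enters phase $i$ its earlier steps --- especially those in phase $i+1$, whose big-step targets also sit inside $B_{(5/2)r_i}(z)$ --- have already exposed some values of $\pi$ in the very region that governs phase $i$. To apply the lemma in phase $i$ one must condition on the route so far, reveal the values of $\pi$ lazily (one visited vertex, and the cycle positions actually consulted, at a time), and argue that along a prefix of length $O(\log^2 n)$ only about that many positions of $\pi$ have been pinned down, so that the conditional law of $\pi$ on the remainder of $A\cup B$ is still essentially uniform and a conditional version of Lemma~\ref{lm:draw} still applies; restricting attention to the high-probability event of the previous paragraph is exactly what keeps the history short enough for this to work. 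Carrying out this bookkeeping is the technical heart of the argument, and it is also why the bound we obtain is $O(\log^2 n)$ rather than the $O(\log n)$ that a clean per-phase expectation --- and our simulations --- suggest is the truth.
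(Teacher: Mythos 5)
You have the right high-level structure --- the same phase decomposition around $z$, essentially the same sets $A = B_{(5/2)2^i}(z)\setminus B_{2^{i-2}}(z)$ and $B = B_{2^{i-2}}(z)$, the same deterministic enumeration of $A$ along the $G_1$-geodesic inside the phase, the same reduction ``lemma event at $a_t$ $\Rightarrow$ big-step edge out of $a_t$,'' and the same per-phase tail bound from Lemma~\ref{lm:draw}. But the proof as written has a genuine gap, and it is exactly the one you flag at the end: your union bound is over \emph{phases only}, so the per-phase probability $n^{-mC}$ you invoke is only valid for a fixed, deterministic entry vertex, whereas the entry vertex into phase $i$ is itself a function of $\pi$. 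You acknowledge this (``the real obstacle is the dependence between phases'') and sketch a lazy-revelation/conditioning repair, but you do not carry it out, and that repair is not a small add-on --- it is the part of the argument that does not come for free.

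The paper closes the gap with a much simpler device: it never conditions at all. For each big phase $i$ and \emph{each} vertex $x$ in that phase, let $E^i_x$ be the (unconditional) event that half-greedy routing, hypothetically started from $x$, takes more than $b\log n$ steps to leave the phase; here the enumeration $a_1 = x, a_2, \dots$ is a fixed deterministic sequence, so Lemma~\ref{lm:draw} applies at face value and, with $b = 2/m$, gives $\PR(E^i_x) \le n^{-2}$. A union bound over the at most $n$ vertices $x$ in the phase gives $\PR(\bigcup_x E^i_x) \le 1/n$, and a further union bound over the at most $\log_2 n$ big phases gives probability $\le (\log_2 n)/n$ that \emph{any} starting vertex in \emph{any} big phase yields a slow traversal. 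On the complementary event, no matter which (data-dependent) vertex the route actually enters each phase at, that phase costs at most $b\log n$ steps --- so the conditioning problem simply never arises. This ``quantify over all possible entry points, then union bound'' step is what your argument is missing; inserting it in place of your per-phase union bound, and dropping the unfinished lazy-revelation discussion, would make your proof essentially identical to the paper's. As a minor point, your fallback on the complementary event (``route length at most $n$'') needs to multiply by the failure probability, which the paper's $\log n / n$ bound makes explicit in equation~(\ref{eq:tail}); your $o(1/n)$ is fine for this but should be stated as part of the expectation calculation rather than left as a throwaway.
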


\begin{proof}
  Let $T$ be time it takes for half-greedy routing between any two
  vertices. We will establish the stronger fact that for $n$
  sufficiently large and a constant $h$
\begin{equation}
\PR(T \geq h (\log n)^2) \leq {\log_2 n \over n}.
\label{eq:tail}
\end{equation}
It then follows that
\begin{eqnarray*}
\E[T] & \leq & h (\log n)^2 \left (1 - {\log_2 n \over n} \right ) + n {\log_2
n \over n} \\
& = & O(\log^2 n).
\end{eqnarray*}

Fix a destination $z$, and let the phases be as in the proof of
Theorem \ref{th:iimod}. Consider the $i-th$ phase (the set of vertices
$x$ such that $2^{i-1} < d_1(x,z) \leq 2^i$), where $i$ is such that
the phase is ``big'', meaning it contains more than $\log^2 n$
vertices. We let $A$ and $B$ from Lemma \ref{lm:draw} be defined by
\[
B = B_{2^{i-2}}(z)
\]
and
\[
A = B_{5 (2^{i-1})}(z) \backslash B
\]
where the $B_r(z)$ are balls with respect to $d_1$. We note that
(distance below always means $d_1$ except where otherwise noted):
\begin{enumerate}
\item Each vertex in the $i$-th phase belongs to $A$.

\item All vertices in $B$ are within distance $3 (2^{i-1})$ of any
  vertex in the $i$-th phase.

\item Every vertex within distance $3 (2^{i-1})$ of a vertex in the
  $i$-th phase is in $A \cup B$.
\end{enumerate}
Together, these three facts mean that if a vertex $x$ in $i$-th phase
has a randomly assigned position in $G_2$ (the cycle) which is at
least as close (with respect to the permuted positions in $G_2$) to a
vertex in $B$ as any vertex other than itself in $A$, the resulting
double clustering graph $G$ will have an edge from $x$ into $B$. 

Now consider half-greedy routing starting from a vertex $x$ in the
$i$-th phase. Let the enumeration of $A$ be so that $a_1 = x$ and each
subsequent $a_j$, for $j$ up to some $\ell$, is the vertex where
$a_{j-1}$ would route if a ``very big step'' was not found. $a_\ell$
is the first vertex encountered so that it has a $G_1$ neighbor in a
lower phase, after this we may order the elements of $A$ as we wish.

Since each vertex in $B$ is less than half as far from $z$ as those in
phase $i$, the random variable $\tau$ from Lemma \ref{lm:draw} thus
dominates the time we spend in the $i$-th phase after starting from a
given vertex.

Let $b = 2/m$, where $m$ is the constant from Lemma 5.5 with $q =
1/c^4 \geq |B|/ |A|$ because of the bounded doubling dimension. Note
that $q$, and thus $m$ and $b$, are independent of which phase we are
in. Let $E^i_x$ be the event that we spend more than $b \log n$ steps
in the $i$-th phase after starting from a vertex $x$ in the phase.
Lemma \ref{lm:draw} and the argument above gives
\begin{equation*}
\PR(E^i_x) \leq {1 \over n^2}.
\end{equation*}
Since the probability is simply uniform measure of permutations of
$[n]$, this means that starting for any given vertex $x$ in the phase,
routing to the next phase will take more than $b \log n$ steps in less
than $1/n^2$ of all the permutations. Since the graph is dependent,
where we enter the phase may depend on the permutation, but the very
worst case scenario is that we always enter the phase at the vertex
where it will take the most steps to route to the next. Let $E^i$ be
the event that starting from \emph{any} vertex in the $i$-th phase, we
spend more than $b \log n$ steps in the phase. 
\begin{eqnarray*}
\PR(E_i) & = & \PR \left ( \cup_{i\text{-th phase}} E^i_x \right ) \\
& \leq & \sum_{i\text{-th phase}} \PR(E^i_x) \leq {1 \over n}
\end{eqnarray*}
where the last inequality holds because every phase trivially contains
at most $n$ vertices.

There are at most $\log_2 n$ ``big'' phases, so the probability of
spending more than $b \log n$ in any of them is less than
$\log_2 n / n$ by another union bound. Since the number of vertices in
the ``small'' phases is less than $4 \log^2 n$, (\ref{eq:tail})
follows with $h = b / \log(2)$.
\end{proof}

\vspace{0.5cm}

The remainder of this section is a proof of Lemma \ref{lm:draw}. In
order to establish the Lemma, we will make use of something we call
the \emph{toy train track} construction of a random permutation. We
equate each vertex on the cycle with a curved segments of track in toy
train set. These segments can be attached to each other to make
longer sections\footnotemark{}, and when all the $n$ segments are attached they form
a complete circle. All the pieces start in a bin, and are either red
(corresponding to vertices in $A$), blue (corresponding to vertices in
$B$), or gray (corresponding to vertices in neither set). We build the
random circular track, starting as follows:

\footnotetext{Our chosen vocabulary is to consistently use
  \emph{segment} for each element, and \emph{section} for connected
  collections of segments.}

\begin{enumerate}
\item We pick up the segment of track corresponding to $a_1$ from bin,
  this is our current section.
\item Uniformly select from the remaining pieces a segment $x$ to
  attach clockwise from the current section, and then another segment
  $y$ to attach counterclockwise from the section.
\item As long as neither the $x$ nor $y$ picked up in the last step is
  a blue or red piece, continue we draw two new pieces to attach to
  the section.
\end{enumerate}

This continues until a red or blue segment has been attached at one or
both ends of the section. At this time the first construction stage is
completed, and we \emph{put the constructed section of track back in
  the bin} together with the other pieces. If at least one end was
blue, then the building phase terminates.

If no blue piece was found, we start the second construction stage, we
try to take out $a_2$ from the bin. If $a_2$ cannot be found on its
own (it was part of the previous section), then the stage ends
immediately. If it is found, then we proceed to build a new section
starting from it as in the first stage, but this time we stop whenever
a blue segment, a red segment, or the previously constructed section
of track is attached to $a_2$'s section. At the end of stage two, we
put $a_2$'s section back in the bin as before (if one was built), and,
unless a blue piece was found, continue to stage three, which we
complete in a similar manner.

If at any time all the pieces have been added to one section the
building phase terminates, and likewise if we run out of red pieces to
start from. When the building phase has terminated, we attach
all the sections and segments in the bin in a random permutation (draw
one at a time, and attach clockwise from the last) to form a completed
circle.

Let $X$ be the number of construction stages. We make three claims
about this construction which together establish Lemma \ref{lm:draw}:

\begin{enumerate}
\item The circle of track segments created is a uniformly random
  circular permutation.
\item $X \geq \tau$ (as defined above) for the corresponding
  permutation.
\item For $t < k/5$, $\PR(X \geq t) \leq e^{-mx}$, where $m$ depends
  on $q$ but not $k$ and $n$.
\end{enumerate}

\begin{proofof}{Claim 1}
  This follows from the conditional distribution of random
  permutations. If one conditions on two segments $s_1$ and $s_2$
  being next to each other, then resulting is distribution is a random
  permutation of the remaining segments, with the $s_1$$s_2$ section
  uniformly inserted. This is equivalent to the returning of the
  section to the bin. Likewise, another section $s_3$$s_4$ would
  simply be uniformly inserted again. The claim follows from a series
  of such arguments.
\end{proofof}

\begin{proofof}{Claim 2}
  This is almost immediate. If we encounter a blue piece during
  construction stage $i$, then all the segments closer to $a_i$ than
  that piece were gray, hence $d_\pi(a_i, A\backslash \{a_i\}) \geq
  d_\pi(a_i, B)$.  If we don't find a blue piece in any construction
  stage, then $X = k$ which is an upper bound on $\tau$.
\end{proofof}

\begin{proofof}{Claim 3}
  Let $E_i$ be the event that we encounter a blue piece in the $i$-th
  construction stage. $X$ is $\min(i : E_i\text{ occurs })$ or $k$ if this is
  undefined. In the first stage, there are $k + q k$ pieces for which
  we terminate, and $q k$ are blue, so $\PR(E_1) \geq q / (1 + q) =: p$ (in
  fact greater).

  Conditioned on $E_1$ not occurring, we let $e_1$ and $e_2$ be the two
  end pieces, and note that
\[
\PR(E_2 \given E_1^c) = \PR(E_2 \given E_1^c \AND a_2 \neq e_1, e_2)
\PR(a_2 \neq e_1, e_2 \given E_1^c).
\]
If $a_2 \neq e_1$ or $e_2$, then second construction stage could
proceed. However, since $E_1$ did not occur, this means that we
removed at least two red segments from the bin, and added only one new
terminating section. Thus:
\[
\PR(E_2 \given E_1^c \AND a_2 \neq e_1, e_2) \geq \PR(E_1) \geq p.
\]
We now have to lower bound $\PR(a_2 \neq e_1, e_2 \given E_1^c)$. The
worst case is that both $e_1$ and $e_2$ are red, in which case we drew
2 red segments out of $k-1$ possible. 
\[
\PR(a_2 \neq e_1, e_2 \given E_1^c) \geq 1 - {2 \over k - 1}
\]

Using similar arguments (and rather conservative estimates), it
follows that for $i \leq k / 3$.
\begin{equation}
\PR(E_i \given E_1^c, E_2^c, \hdots, E_{i-1}^c) \geq p \left (1 -
  {2(i-1) \over k - (i - 1)} \right ) = p {k - 3(i - 1) \over k - (i -
  1)}
\label{eq:ebnd}
\end{equation}
whence
\begin{eqnarray*}
  \PR(X \geq t)  & = & \prod_{i=1}^t \PR(E_i^c \given E_1^c,
  E_2^c, \hdots, E_{i-1}^c) \\
& \leq & \prod_{i=1}^t \left (1 - p {k - 3(i - 1) \over k - (i -
  1)} \right ) \\
& \leq & \left (1
  - {p \over 2} \right )^t = e^{- m t}
\end{eqnarray*}
where $m = -\log \left(1 - {p \over 2}\right)$.
\end{proofof}

\section{Simulations}

Simulations support the conjecture that double clustering creates
navigable graphs over a larger span of structures. In cases where the
first graph is not a directed cycle, one can see that double
clustering gives slightly worse performance than when the edges are
independent, as is expected. However, the simulation data still
strongly indicates a logarithmic growth of path-length with the size of
the graph.

\subsection{Combined Greedy Routing}

Since the double clustering construction is symmetric, it should
create equally navigable networks with regard to both spaces. Thus we
can perform greedy routing in the double clustering graph with respect
to either distance function (which will sometimes lead to different
results, see below.)

A direct consequence of this is that we may try to route with respect
to to both distance functions, using at each step that which seems
most profitable. As above, assume that $z$ is the target of the route.
\begin{itemize}
\item At vertex $x$, we calculate $m_1 = d_1(w_1, z)$, where $w_1$ is
  the neighbor of $x$ which minimizes this. Similarly, calculate $m_2
  = d_2(w_2, z)$.
\item Let $n_1$ be the number of vertices within $m_1$ of $z$ in the
  first space ($M_1$) - if the space if homogeneous this is the volume
  of a ball of diameter $m_1$. Let $n_2$ be the equivalent for $m_2$
  and the second space.
\item Route to $w_2$ if $m_2$ is smaller than $m_1$, otherwise $w_1$.
\end{itemize}

We simulate combined routing as well as normal greedy routing for the
models below. In these models it seems that the benefit of using this
method regains that lost by the dependencies in the double clustering
construction: combined greedy paths are shorter than greedy paths in
the independent interest model of the same size.

\subsection{Two Undirected Cycles}

\begin{figure}
  \centering
 \resizebox{4in}{!}{\includegraphics{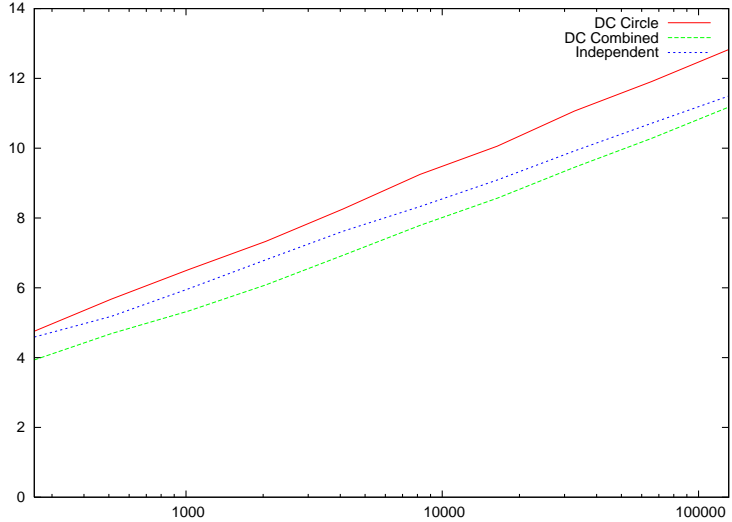}}
  \caption{Performance of double clustering graph versus the
    independent interest model using two undirected cycles.}
  \label{fig:circlecircle}
\end{figure}

The simplest undirected double clustering model is the case of
Definition \ref{def:rdc} where both $G_1$ and $G_2$ are undirected
cycles. A bound on half-greedy routing in this model is derived above,
but we can simulate also the normal greedy algorithm. The results
illustrated in Figure \ref{fig:circlecircle} - at all sizes simulated
greedy routing with respect to either cycle produces slightly longer
paths than the equivalent independent model, while combined greedy
routing produces slightly shorter paths. All lines seem to follow
strictly logarithmic growth.

\subsection{A Grid and a Tree}

\begin{figure}
  \centering
 \resizebox{4in}{!}{\includegraphics{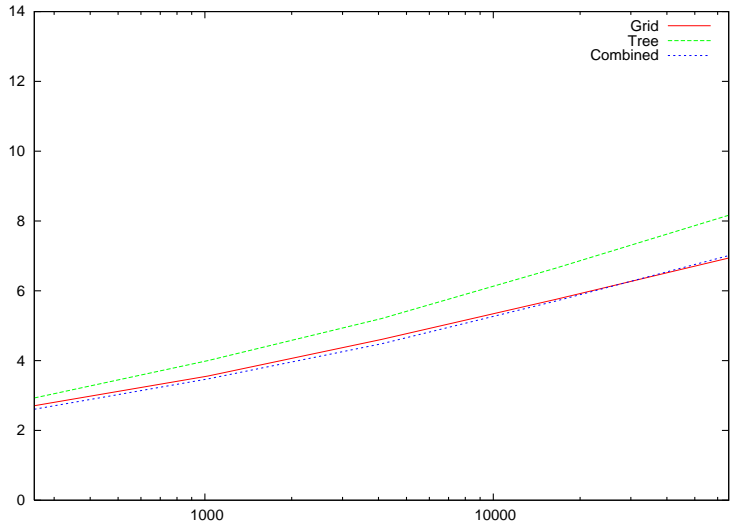}}
 \caption{Performance of double clustering using letting the first
   space (the geography) be a two dimensional grid, and the second a
   tree with the points as leaves (a categorization).}
  \label{fig:gridtree}
\end{figure}

Kleinberg's original work consisted started with a two dimensional
grid as the base graph and distance function, inspired, one expects,
by the dimensionality, if not population distribution, of the surface
of the earth. Later he \cite{kleinberg:dynamics} and Watts et al.
\cite{watts:social} proposed equivalent models based on letting
vertices have positions at the leaves of a tree. The tree represents a
hierarchical model of information, ideas, interests or other
characteristics, and the distance function is standard tree distance:
$d(x,y)$ is the depth of smallest subtree containing both. The
criteria for navigable augmentation in these cases is consistent with
(\ref{eq:genaug}).

A natural attempt at a realistic double clustering model is to combine
both of Kleinberg's models - we let the first space be a grid, and the
second be a hierarchical tree structure (in our case, a binary tree,
though any other branching is possible). We note that while the tree
distance provides a well defined metric, this space can not be seen as
a graph, so this is a sub-model of Definition \ref{def:gdc} rather than
Definition \ref{def:rdc}. A problem with the more general model is
that greedy routing is not necessarily always successful: we may reach
a vertex other than the destination with no neighbor which is closer
to the destination than itself. This can occur in this model when
routing with respect to the tree, or the combined distance, but not in
the grid (where links to neighbors in all directions always exist) -
in our simulations we simply fail and discard such routes\footnotemark{}.

Figure \ref{fig:gridtree} shows a simulation of this situation.
Routing purely using the tree shows slightly worse performance than
routing using the grid, and as such the advantage of the combined
model is less than above (for the largest data-point simulated it was,
in fact, nonexistent). As expected not all routes were successful - at
a network size of $2^{16}$ about 0.8 of the routes using only the
tree, and 0.9 of the routes using the combined routing were
successful. Another effect of the tree is that the degree of the
double clustering graph is much higher (since many vertices have the
same distance, and we only require them to be as close as any
previous.)

\footnotetext{When routing for $z$, we do allow $x$ to route to a
  vertex at the same distance as itself if no better choice, but (so
  as to not cause loops) we forbid routing to a vertex already in the
  path). This is important since tree distance has the property that
  there are a very large number of vertices at the same distance from
  any other, a large majority of the routes fail when routing for tree
  distance if this is not allowed.}

\subsection{Continuum Models}

Discrete and grid based models cannot realistically describe most
naturally occurring networks: especially social networks which are
characterized by individuals placed randomly in continuums and often
with heterogeneous population density. Continuum models for navigable
networks have been explored by Franceschetti and Meester
\cite{franceschetti:navigation} and \cite{draief:poisson} as well this
author \cite{sandberg:neighbor}, and Liben-Nowell et. al
\cite{nowell:geographic} has proposed a model based on real data that
includes non-uniform Poisson density of positions. Figure
\ref{fig:dc100} shows a simulation a continuum model with 100
vertices.


\section{Conclusion}

We have introduced a new form random graph construction, which when
combined with a random permutation of the points used to create the
graph gives rise to networks with navigable properties. These graphs
are constructed from a single natural principle, and may help explain
why networks of this type occur in real world networks.

While we have established navigability under a several cases, the
analysis presented here is far from complete. In a sense it is
unfortunate that we are able to analyze an unrealistic model (the
directed cycle) for an intuitive clear routing principle, while the
proof for the more realistic model requires somewhat contrived
routing. Theorem \ref{th:halfgreedy} also has an extra $\log n$
multiple included for technical reasons in the proof: we believe
strongly that neither this term (the actual bound is $O(\log n)$) nor
the use of half-greedy routing is actually necessary. In fact, based
on the absence of any opposing evidence in simulations or otherwise,
we believe

\begin{conjecture}
  Let $\mathcal{F}_1$ and $\mathcal{F}_2$ be two families of graphs
  with bounded doubling dimension (not necessarily with the same
  constants). For any two graphs $G_1 \in \mathcal{F}_1$ and $G_2 \in
  \mathcal{F}_2$ of size $n$, the doubling clustering graph from
  Definition \ref{def:rdc}, allows greedy routing in $O(\log n)$
  expected steps.
\end{conjecture}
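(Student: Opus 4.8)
The plan is to push the phase argument of Theorems~\ref{th:iimod} and~\ref{th:halfgreedy} as far as it will go, and then to attack the two places where that argument is lossy: the crutch of half-greedy routing and the spurious $\log n$ factor. As before, fix a target $z$ and split $V$ into $O(\log n)$ phases by $d_1$-distance, putting $x$ in phase $i$ when $2^{i-1} < d_1(x,z) \le 2^i$; it suffices to show that from any vertex of a ``big'' phase the expected number of steps before the route enters a lower phase is $O(1)$, the $O(\log^2 n)$ vertices of small phases contributing only an additive term. The key structural fact, which is really the content of Definition~\ref{def:rdc}, is that the out-neighbours of a vertex $x$ are exactly the \emph{records} of the sequence obtained by listing the other vertices in order of increasing $d_1$-distance from $x$ and keeping those strictly $d_2$-closer (in permuted coordinates) to $x$ than everything seen so far. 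Hence if $B = B_{2^{i-2}}(z)$ and $A \cup B = B_{(5/2)2^i}(z)$, then for any $x$ in phase $i$ the ball $B_{(3/2)2^i}(x)$ sits inside $A\cup B$ and contains $B$, so $x$ has an out-edge into $B$ whenever, among the vertices of $A\cup B$ other than $x$, the one whose permuted position is $d_2$-closest to $\pi(x)$ lies in $B$. Since both $\mathcal F_1$ and $\mathcal F_2$ have bounded doubling dimension, $|B|/|A\cup B|$ exceeds a constant $q>0$ (one may take $q=1/c^4$), and \emph{unconditionally} this favourable event has probability at least $q$ by the obvious symmetry of the uniform permutation.

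The obstacle, exactly as the authors flag, is dependence on the path already travelled. Conditioning on the greedy prefix $x_1\leadsto\cdots\leadsto x_k\leadsto x$ reveals the out-edge structure of each $x_j$, a function of $\pi$ restricted to $d_1$-balls about the $x_j$; since the $x_j$ can themselves lie in phase $i$, those balls overlap $A\cup B$, so the favourable event is not independent of the conditioning. In the two-directed-cycle case this was defeated by the monotonicity of Corollary~\ref{co:mon}: the revealed data always lay ``behind'' the route in the permuted cycle, and the arc-flipping bijection gave outright independence. With $G_2$ a general bounded-doubling graph there is no linear order and hence no ``behind''. The route I would take is to generalise the toy train track coupling behind Lemma~\ref{lm:draw}: instead of growing a circle by attaching arcs clockwise and counterclockwise from $\pi(a_i)$, one exposes the labelling $\pi^{-1}$ of the fixed graph $G_2$ by revealing its vertices in order of increasing $d_2$-distance from each probe point and stopping the first time a vertex labelled by a member of $A\cup B$ appears. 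One then re-proves the three claims in this setting: that this exposure yields a uniform permutation (the analogue of Claim~1, which should follow because conditioning on the label-set of a $d_2$-ball, but not its internal assignment, leaves a uniform permutation of the remainder); that the number of probes dominates the phase-exit time (Claim~2); and that this number is stochastically dominated by a geometric variable whose parameter depends only on $q$ (Claim~3, whose counting is essentially unchanged). Run with half-greedy routing — so that the probe points form a sequence $a_1,a_2,\dots$ determined by $G_1$ alone — this reproduces the $O(\log^2 n)$ bound of Theorem~\ref{th:halfgreedy} for arbitrary bounded-doubling $G_1$ and $G_2$.

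Two harder steps remain, and I expect them to be the real difficulty. First, removing the extra $\log n$: that factor enters only through the union bound over all possible entry vertices of a phase, which is forced because the entry point is $\pi$-dependent. To eliminate it one needs an amortised statement — that, averaged over the data-dependent distribution of the vertex at which the route enters phase $i$, the conditional expected exit time is still $O(1)$ — which calls for some control of the joint law of the entry point and the local permutation rather than a worst-case bound; one possible device is to charge the rare long sojourns against the $O(\log n)$ phases instead of against individual vertices. Second, and most seriously, removing half-greedy routing: under true greedy routing small and large steps interleave, so consecutive visited vertices no longer have (near-)independent permuted positions and the exposure process can no longer be driven by a deterministic-in-$G_1$ sequence. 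Handling this appears to require a genuinely new way of coupling the greedy walk with the random permutation that does not rely on a linear order on $G_2$ — precisely the ingredient the directed-cycle proof exploits and that is absent in general. This, I believe, is why the statement is offered as a conjecture rather than a theorem.
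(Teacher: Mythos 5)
The statement you were asked to prove is offered in the paper only as a \emph{conjecture}: there is no proof of it anywhere in the text, and the author explicitly says that double clustering for general bounded-doubling pairs ``can not yet'' be proved navigable. So there is no proof in the paper to compare yours against, and your write-up is, by your own admission, not a proof either. What it is, and what it does well, is diagnose why Theorem~\ref{th:halfgreedy} stops short of the conjecture. You correctly locate the two sources of loss: the union bound over all possible entry vertices of a phase, which is forced because the entry point is $\pi$-dependent and costs the extra $\log n$; and the half-greedy crutch, needed so that the probe sequence $a_1, a_2, \dots$ is determined by $G_1$ alone rather than entangled with $\pi$. You also correctly identify that the directed-cycle argument (Corollary~\ref{co:mon} plus the arc-flipping bijection) lives and dies by a linear order on $G_2$, which a general bounded-doubling $G_2$ does not have, and that the natural line of attack is to replace the toy-train-track exposure of Lemma~\ref{lm:draw} by an exposure of $d_2$-balls in a general $G_2$. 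Your honest conclusion that true greedy routing demands a genuinely new coupling is exactly the paper's stance, so as a diagnosis of the state of the problem your account is accurate.

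One caution on the part you treat as routine: in the cycle, ``putting the built section back in the bin'' works because a contiguous arc collapses to a single super-segment that can be uniformly reinserted, and a later probe that hits revealed territory hits a clean arc endpoint. For a general bounded-doubling $G_2$, the region revealed around $\pi(a_i)$ is a graph ball, not an interval; subsequent probes can intersect it in irregular pieces, and the stopping rule ``stop when you touch the previously built section'' no longer has a tidy combinatorial meaning. The analogue of Claim~1 (that the residual labelling is still a uniform permutation after conditioning on the exposed partial assignment) and the counting in Claim~3 both need more than the gloss ``should follow''; this is a genuine third obstacle in addition to the two you flag, even before one tries to discard half-greedy routing or the spurious $\log n$. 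None of this contradicts your proposal --- it just means the conjecture is, if anything, slightly harder than your sketch makes it look, which is consistent with the paper leaving it open.
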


Proving this in general is difficult since the structure of the two
base graphs control the dependence between the edges in the
construction. We are however hopeful that progress can be made in this
direction.  Making rigorous stronger conjectures about Definition
$\ref{def:gdc}$ is also difficult since monotonic greedy paths between
vertices may not always exist, but we believe that the resulting graph
will be navigable whenever such augmentation is possible.

Beyond this, the double clustering graph, as a new form of graph
construction, has not been analyzed for questions other than
navigability. Questions such as connectivity, diameter, and edge
length remain open in some or all cases. And, finally, the question of
how well double clustering actually matches the real world has not
been investigated.


\section*{Acknowledgements}

Thanks to my advisors Olle Häggström and Devdatt Dubhashi for helpful
comments and corrections. Thanks also to Elchanan Mossel for comments
that helped me complete the final proof.

\bibliographystyle{plain}
\bibliography{../../tex/ossa}

\end{document}